\documentclass[11pt]{article}
\usepackage[T1]{fontenc}
\usepackage[utf8]{inputenc}
\usepackage{lmodern}
\usepackage{amsmath,amssymb,amsthm,mathtools}
\usepackage[a4paper,margin=28mm]{geometry}
\usepackage{enumitem}
\usepackage{authblk}
\usepackage[colorlinks=true,linkcolor=blue,citecolor=blue,urlcolor=blue]{hyperref}
\usepackage[backend=biber,style=numeric,sorting=nty,doi=false,isbn=false,
  url=false,eprint=false,maxbibnames=100,giveninits=true]{biblatex}
\renewbibmacro{in:}{}

\newcommand{\scrN}{\mathcal N}
\newcommand{\Nho}{\scrN^h_\Omega}
\newcommand{\Haus}{\mathcal H}
\newcommand{\frakb}{\mathfrak b}
\newcommand{\frakd}{\mathfrak d}
\newcommand{\frakc}{\mathfrak c}
\newcommand{\frakv}{\mathfrak v}
\newcommand{\Q}{\mathbb Q}
\newcommand{\Pp}{\mathbb P}
\newcommand{\R}{\mathbb R}
\newcommand{\restrict}{\upharpoonright}
\newcommand{\append}{\mathbin{{}^\frown}}
\newcommand{\seq}[1]{\langle #1\rangle}
\newcommand{\Con}{\operatorname{Con}}
\newcommand{\ZFC}{\mathsf{ZFC}}
\newcommand{\CH}{\mathsf{CH}}

\DeclareMathOperator{\add}{add}
\DeclareMathOperator{\cof}{cof}
\DeclareMathOperator{\cov}{cov}
\DeclareMathOperator{\non}{non}
\DeclareMathOperator{\diam}{diam}
\DeclareMathOperator{\Std}{Std}
\DeclareMathOperator{\succs}{succ}
\newcommand{\nor}{\mathsf{nor}}
\newcommand{\Loc}{\mathbf{Lc}}
\newcommand{\Sl}{\mathcal S}
\newcommand{\leT}{\preceq_{\mathrm T}}

\theoremstyle{plain}
\newtheorem{thm}{Theorem}
\newtheorem{lem}[thm]{Lemma}
\newtheorem{prop}[thm]{Proposition}
\newtheorem{cor}[thm]{Corollary}
\theoremstyle{definition}
\newtheorem{defi}[thm]{Definition}
\newtheorem{rmk}[thm]{Remark}
\setlist[enumerate,1]{label=(\arabic*),leftmargin=*}
\title{The additivity of a certain Hausdorff measure can differ from that of the Lebesgue measure}
\author{Tatsuya Goto}
\affil{Institute of Discrete Mathematics and Geometry, TU Wien\\
  Wiedner Hauptstrasse 8--10/104, 1040 Wien, Austria\\
  \texttt{goto.tatsuya@icloud.com}}
\date{\today}

\begin{document}
\maketitle

\begin{abstract}
Let $\mathcal{N}^h_\Omega$ be the null ideal of the Hausdorff measure constructed by Davies and Rogers.
We give a Tukey reduction of $(\mathcal{N}^h_\Omega,\mathcal{N}^h_\Omega,\subseteq)$ to a localization system with finite coordinate sets.
It follows that $\mathfrak{v}^\forall_{D,g}\le\operatorname{add}(\mathcal{N}^h_\Omega)$ and $\operatorname{cof}(\mathcal{N}^h_\Omega)\le\mathfrak{c}^\forall_{D,g}$ for some $D, g \in \omega^\omega$.
Consequently, we prove the consistency of $\mathfrak{d}<\operatorname{add}(\mathcal{N}^h_\Omega)$ and, separately, $\operatorname{cof}(\mathcal{N}^h_\Omega)<\mathfrak{b}$.
Hence the additivity and cofinality of $\mathcal{N}^h_\Omega$ can differ from those of the Lebesgue null ideal.
We also show that the finite graphs in the Davies--Rogers construction can be chosen so that $\operatorname{cov}(\mathcal{N}^h_\Omega)\le\operatorname{non}(\mathcal{N}^h_\Omega)$.
\end{abstract}

\noindent\textit{Keywords:} Hausdorff measure, cardinal invariants, localization, proper forcing.\\
\textit{2020 Mathematics Subject Classification:} 03E17, 03E35, 28A78.

\section{Introduction}

Combining Howroyd's theorem \cite{howroyd1995dimension} with Fremlin's proof of \cite[Theorem~534B]{fremlin2008measure}, adapted to continuous doubling gauge functions, gives the following: if $X$ is an analytic metric space and $f$ is such a gauge with $\Haus^f_X(X)>0$, then its null ideal $\scrN^f_X$ satisfies
\[
  \add(\scrN^f_X)=\add(\scrN),\qquad
  \cof(\scrN^f_X)=\cof(\scrN).
\]
It is therefore natural to ask whether these equalities can fail for a Hausdorff measure whose gauge does not satisfy the doubling condition.
In this paper, we show that this is indeed possible: for the Hausdorff measure constructed by Davies and Rogers, each equality can consistently fail.

Davies and Rogers \cite{daviesrogers} constructed a compact metric space $\Omega$ and a gauge $h$ such that
\[
  \Haus^h_\Omega(\Omega)=\infty,
  \qquad \Haus^h_\Omega(A)\in\{0,\infty\}\quad(A\subseteq\Omega).
\]
Write $\Nho=\{A\subseteq\Omega:\Haus^h_\Omega(A)=0\}$.
In \cite{goto-dr-notes} we proved
\[
  \add(\scrN)\le\add(\Nho),\qquad
  \cof(\Nho)\le\cof(\scrN),
\]
and asked whether either inequality can be strict.
We answer both questions affirmatively, and refute the proposed bounds $\add(\Nho)\le\frakb$ and $\frakd\le\cof(\Nho)$ \cite[Question~24(1),(2)]{goto-dr-notes}.

For a proper ideal $\mathcal J$ on a set $X$ containing all singletons, put
\begin{align*}
  \add(\mathcal J)&=\min\{|\mathcal A|:\mathcal A\subseteq\mathcal J,
                                      \ \bigcup\mathcal A\notin\mathcal J\},\\
  \cov(\mathcal J)&=\min\{|\mathcal A|:\mathcal A\subseteq\mathcal J,
                                      \ \bigcup\mathcal A=X\},\\
  \non(\mathcal J)&=\min\{|A|:A\subseteq X,\ A\notin\mathcal J\},\\
  \cof(\mathcal J)&=\min\{|\mathcal A|:\mathcal A\subseteq\mathcal J,
                 \ (\forall A\in\mathcal J)(\exists B\in\mathcal A)\ A\subseteq B\}.
\end{align*}
For $x,y\in\omega^\omega$, write $x\le^*y$ if $x(n)\le y(n)$ for all but finitely many $n$.
The bounding number $\frakb$ and the dominating number $\frakd$ are the least sizes of an unbounded and a dominating family in $(\omega^\omega,\le^*)$, respectively, and $\frakc=2^{\aleph_0}$.
Let $\scrN$ denote the Lebesgue null ideal; see \cite{bartoszynski1995set} for background on these cardinal invariants.

A \emph{gauge function} is a nondecreasing, right-continuous function $f:[0,\infty)\to[0,\infty)$ with $f(0)=0$.
For a metric space $X$ and $A\subseteq X$, the $f$-Hausdorff measure is
\[
  \Haus^f_X(A)=\lim_{\delta\downarrow0}\inf\left\{
    \sum_{i<\omega}f(\diam U_i):
    A\subseteq\bigcup_{i<\omega}U_i,\quad
    U_i\subseteq X,\quad \diam U_i\le\delta
  \right\},
\]
and its null ideal is $\scrN^f_X=\{A\subseteq X:\Haus^f_X(A)=0\}$.
The gauge $f$ is \emph{doubling} if $f(2r)\le C f(r)$ for some $C>0$ and all sufficiently small $r>0$.

Our main result uses the localization system
\[
  \Loc(D,g)=\left(\prod_{n<\omega}D_n,\Sl(D,g),\in^*\right),
  \qquad \Sl(D,g)=\prod_{n<\omega}[D_n]^{\le g(n)},
\]
where $x\in^*\varphi$ means $x(n)\in\varphi(n)$ for all but finitely many $n$.
Its bounding and dominating numbers are denoted by $\frakv^\forall_{D,g}$ and $\frakc^\forall_{D,g}$.
Explicitly, $\frakv^\forall_{D,g}$ is the least size of a family in $\prod_nD_n$ that no single slalom in $\Sl(D,g)$ eventually localizes, and $\frakc^\forall_{D,g}$ is the least size of a family of slaloms that eventually localizes every member of $\prod_nD_n$.

\begin{thm}\label{thm:main}
For the finite sets $D_n$ defined in \eqref{eq:parameters} and $g(n)=2^n$,
\[
  (\Nho,\Nho,\subseteq)\leT\Loc(D,g).
\]
Consequently,
\[
  \frakv^\forall_{D,g}\le\add(\Nho),\qquad
  \cof(\Nho)\le\frakc^\forall_{D,g}.
\]
\end{thm}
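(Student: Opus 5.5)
We need to exhibit a Tukey connection: maps $\Phi_-\colon\Nho\to\prod_n D_n$ and $\Phi_+\colon\Sl(D,g)\to\Nho$ such that whenever $\Phi_-(A)\in^*\varphi$ we have $A\subseteq\Phi_+(\varphi)$. The two cardinal inequalities then follow by the standard properties of Tukey reductions. The bounding number of $(\Nho,\Nho,\subseteq)$ is $\add(\Nho)$ and its dominating number is $\cof(\Nho)$, and Tukey reductions reverse bounding numbers and preserve dominating numbers. So all the work lies in the connection.

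The plan is to exploit the tree structure of the Davies--Rogers space. $\Omega$ is built as an inverse limit of finite graphs/levels. Each point is coded by a branch through a finitely branching tree, and basic clopen sets at level $n$ have diameters $d_n\downarrow0$, chosen so that $h$ behaves very non-doubly across levels. I expect the sets $D_n$ in \eqref{eq:parameters} to be, or to code, finite families of level-$n$ basic sets (or finite unions of them) with total $h$-weight at most about $2^{-n}$ divided by the slalom width.

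\textbf{Step 1 (coding null sets).} Given $A\in\Nho$, for each $n$ choose a cover of $A$ by basic sets of levels $\ge n$ with $h$-sum $<2^{-n}$. Using the structure of $\Omega$, reduce to covers that, for each level $k$, use only a finite set of level-$k$ pieces. The heart of the Davies--Rogers construction is that covering a set by many small pieces is costly, so one should be able to normalize a cover to a single level or a bounded window of levels. Then $\Phi_-(A)(n)\in D_n$ is the finite object recording the level-$k_n$ part of the $n$-th cover, arranged so that $A\subseteq\bigcap_m\bigcup_{n\ge m}\Phi_-(A)(n)$, or the analogous $\limsup$ containment.

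\textbf{Step 2 (decoding slaloms).} Given $\varphi\in\Sl(D,g)$, let $\Phi_+(\varphi)=\bigcap_m\bigcup_{n\ge m}\bigcup\varphi(n)$. Since $|\varphi(n)|\le2^n$ and each element of $D_n$ has $h$-weight bounded by roughly $4^{-n}$, the union over $n\ge m$ has $h$-content at most $\sum_{n\ge m}2^{-n}$ at scale $d_{k_m}$. Hence $\Haus^h_\Omega(\Phi_+(\varphi))=0$. The containment $A\subseteq\Phi_+(\varphi)$ holds whenever $\Phi_-(A)(n)\in\varphi(n)$ for almost all $n$.

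\textbf{Main obstacle.} The hard part is Step 1: showing that an arbitrary small-weight cover can be replaced by one whose level-$n$ part lies in a \emph{finite} set $D_n$ of bounded weight, with weight bounds tight enough to absorb the factor $g(n)=2^n$. Here the specific arithmetic of $h$ and of the graphs in \eqref{eq:parameters} must be used. I would first try to show that any set of diameter between consecutive level scales can be enlarged to a basic set at the next coarser level without increasing $h$ by more than a controlled factor, and then choose the speed of the subsequence $k_n$ to make the weight budget $4^{-n}$ work.
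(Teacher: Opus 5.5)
Your decoding map $\Phi_+$ is essentially the right one, but there is a genuine gap, and it lies in Step~2 rather than Step~1. The statement fixes $D_n=\{\mathcal F\subseteq\Std_n:a_n|\mathcal F|\le1\}$ from \eqref{eq:parameters}, so members of $D_n$ have $h$-weight up to $1$, not about $4^{-n}$.

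With these alphabets, coding is routine. Take standard covers $\mathcal U_j$ of $A$ by sets of rank $\ge j$ with weight $\le2^{-j-1}$, and let $\sigma_A(n)$ be the set of rank-$n$ members of $\bigcup_j\mathcal U_j$. Then $\sum_na_n|\sigma_A(n)|\le1$ and $A\subseteq\limsup_n\bigcup\sigma_A(n)$. But the decoded families $\mathcal F^\varphi_n=\bigcup\varphi(n)$ only satisfy $a_n|\mathcal F^\varphi_n|\le2^n$, so your tail-sum estimate diverges.

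Moving the burden into Step~1, as you propose, would change the alphabets in the statement. For $k_n=n$ it is also impossible. For the uniform product measure $\mu$ on $\Omega$, one has $\mu(S)\le2^na_n$ for $S\in\Std_n$, so any limsup of rank-$n$ families of weight $\le4^{-n}$ is $\mu$-null. Now take $H_n\subseteq G_n$ independent with $|H_n|\ge N_n/4$ (use \eqref{eq:weighted} with $w\equiv1$). The set $\{x:(\exists^\infty n)\,x(n)\in H_n\}$ is in $\Nho$ by \eqref{eq:summable}, yet it is $\mu$-conull by the second Borel--Cantelli lemma. Your proposed repair also fails: passing from rank $n$ to rank $n-1$ multiplies $h$ by $M_n=a_{n-1}/a_n$, which is unbounded. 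This is exactly where $h$ is not doubling.

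The missing idea is a compression lemma: if $\mathcal F_n\subseteq\Std_n$ and $a_n|\mathcal F_n|\le2^n$ for all $n$, then $\limsup_n\bigcup\mathcal F_n\in\Nho$. It rests on the weighted independence property \eqref{eq:weighted}, not on diameters alone.

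For $t\in\prod_{i<n-1}G_i$ and $v\in G_{n-1}$, let $w_t(v)=|\{H:S(t\append v,H)\in\mathcal F_n\}|$. Greedily extract $4n$ independent sets $K_{t,j}\subseteq G_{n-1}$, each capturing at least a quarter of the remaining weight. Every $S(t\append v,H)\in\mathcal F_n$ with $v\in K_{t,j}$ lies in the rank-$(n-1)$ standard set $S(t,K_{t,j})$. There are at most $4nP_{n-1}$ such sets, with total weight at most $4n\cdot2^{-(n-1)}$, however large $\mathcal F_n$ is. The uncaptured members number at most $(3/4)^{4n}|\mathcal F_n|\le(3/4)^{4n}4^nP_n$, with total weight at most $(81/128)^n$.

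Both bounds are summable, so \eqref{eq:summable} handles each part and gives $\Phi_+(\varphi)\in\Nho$. Without this step, or another argument exploiting the graph structure, your proposal does not yield the Tukey connection.
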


The key step is a covering lemma: families $\mathcal F_n$ of rank $n$ standard sets satisfying $a_n|\mathcal F_n|\le2^n$ still have a null limsup.
We prove this by moving most of each family to rank $n-1$.

\begin{cor}\label{cor:consistency}
Assuming $\Con(\ZFC)$, each of the following is consistent with $\ZFC$:
\begin{align}
  \aleph_1=\frakb=\frakd
    &<\add(\Nho)=\frakc=\aleph_2;\label{eq:add-model}\\
  \aleph_1=\cof(\Nho)
    &<\frakb=\frakd=\frakc=\aleph_2.\label{eq:cof-model}
\end{align}
\end{cor}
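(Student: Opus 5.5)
The plan is to derive both consistency statements from Theorem~\ref{thm:main}, i.e. from $\frakv^\forall_{D,g}\le\add(\Nho)$ and $\cof(\Nho)\le\frakc^\forall_{D,g}$, using standard forcing facts about localization cardinals with finite coordinate sets. Everything reduces to building models where the localization cardinals relative to $D$ and $g(n)=2^n$ behave differently from $\frakb$ and $\frakd$. Since each $D_n$ is finite, $\prod_n D_n$ is compact and $\omega^\omega$-bounding forcings add no unbounded reals. The proof thus splits into two separate forcing arguments.

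For \eqref{eq:add-model} I will use a countable support iteration of length $\omega_2$ over a model of $\CH$. Each iterand is a tree forcing that generically adds a slalom $\varphi\in\Sl(D,g)$ localizing all ground-model members of $\prod_n D_n$. Concretely, conditions are trees whose level-$n$ nodes are elements of $[D_n]^{\le 2^n}$, with a Shelah-style norm. Since $|D_n|$ grows much faster than $2^n$, the norms of these creature-like sets can go to infinity. Such forcings are proper and $\omega^\omega$-bounding, so $\frakd=\aleph_1$ in the extension. A bookkeeping argument shows that every family of size $\aleph_1$ in $\prod_nD_n$ appears at some stage and is localized by a later generic slalom, giving $\frakv^\forall_{D,g}=\aleph_2$. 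Theorem~\ref{thm:main} then yields $\add(\Nho)\ge\aleph_2=\frakc$.

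For \eqref{eq:cof-model} I will use a countable support iteration of Miller (rational perfect set) forcing of length $\omega_2$ over $\CH$, which makes $\frakb=\frakd=\frakc=\aleph_2$. It remains to check $\frakc^\forall_{D,g}=\aleph_1$, i.e. that the ground-model slaloms still localize every new element of $\prod_nD_n$. This follows from the Sacks property for such functions, which I obtain from the Laver property of Miller forcing (preserved under countable support iterations) together with the fact that members of $\prod_n D_n$ are bounded by a ground-model function. More precisely, Laver property gives, for any bounded-by-ground-model name $\dot x$, a ground slalom of width $2^n$ (or any fixed width going to infinity) capturing $\dot x$ on all coordinates. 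Then the $\aleph_1$ ground slaloms of width $2^n$ witness $\frakc^\forall_{D,g}=\aleph_1$, so $\cof(\Nho)=\aleph_1$.

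The main obstacle is the first model: I must verify that the slalom-adding tree forcing for the specific parameters $D_n$ of \eqref{eq:parameters} is proper and $\omega^\omega$-bounding. This needs a norm with enough bigness and halving, which uses that $|D_n|$ grows sufficiently faster than $2^n$. If that growth is strong enough, I expect to cite the known result, from the Kellner--Shelah and Klausner--Mejía analyses of localization cardinals with finite coordinates, that $\frakd<\frakv^\forall_{D,g}$ is consistent under such a growth hypothesis. The second model is routine.
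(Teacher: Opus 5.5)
Your argument for \eqref{eq:add-model} is the paper's argument: a countable support iteration of length $\omega_2$ over $\CH$ of a proper $\omega^\omega$-bounding tree forcing whose generic branch is a slalom in $\Sl(D,g)$ localizing every ground-model member of $\prod_nD_n$. Two small points. First, no special growth of $|D_n|$ over $g(n)$ is needed. The paper's norm $\nor_n(X)=\min\{|u|:u\subseteq D_n,\ (\forall a\in X)\,u\nsubseteq a\}$ is subadditive and drops by at most $1$ under pruning, and this works whenever $g\to\infty$ and $g(n)<|D_n|$. Second, no bookkeeping is needed, since every family of size $\aleph_1$ already lies in an intermediate model.

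The gap is in \eqref{eq:cof-model}, the part you call routine. Miller forcing does have the Laver property, and that property is preserved by countable support iterations of proper forcings. So in the Miller model the $\aleph_1$ ground-model slaloms do witness $\frakc^\forall_{D,g}=\aleph_1$, hence $\cof(\Nho)=\aleph_1$.

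However, Miller forcing adds no dominating real. It is almost $\omega^\omega$-bounding, and countable support iterations of such proper forcings keep the ground-model reals unbounded. The Miller model therefore satisfies $\frakb=\aleph_1<\frakd=\frakc=\aleph_2$, not $\frakb=\frakd=\frakc=\aleph_2$. You obtain only $\cof(\Nho)<\frakd$, not the required (and more interesting) $\cof(\Nho)<\frakb$.

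The fix is to iterate a forcing that has the Laver property and also adds a dominating real, such as Laver forcing (the paper's choice) or Mathias forcing. Each generic real then dominates all reals of the earlier stages. Since every family of size $\aleph_1$ lies in an intermediate model, this gives $\frakb=\aleph_2$. Your Laver-property argument for $\frakc^\forall_{D,g}=\aleph_1$ goes through unchanged.
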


We obtain \eqref{eq:cof-model} in the Laver model.
For \eqref{eq:add-model}, we iterate a proper $\omega^\omega$-bounding forcing which adds an eventual localizing slalom.
Since
\[
  \add(\scrN)\le\frakb\le\frakd\le\cof(\scrN),
\]
Corollary~\ref{cor:consistency} gives the following.

\begin{cor}\label{cor:lebesgue}
Assuming $\Con(\ZFC)$, each of the following is consistent with $\ZFC$:
\[
  \add(\scrN)<\add(\Nho),\qquad
  \cof(\Nho)<\cof(\scrN).
\]
\end{cor}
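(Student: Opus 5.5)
The statement to prove is Corollary~\ref{cor:lebesgue}. It follows directly from Corollary~\ref{cor:consistency} together with the ZFC inequalities $\add(\scrN)\le\frakb$ and $\frakd\le\cof(\scrN)$. Both inequalities are standard parts of Cicho\'n's diagram, since $\frakb\le\frakd$ is immediate. So the plan is to take each model from Corollary~\ref{cor:consistency} and read off the desired strict inequality there. Nothing else in the paper (in particular, not the Tukey reduction of Theorem~\ref{thm:main}) needs to be revisited; it enters only through Corollary~\ref{cor:consistency}.

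\textbf{First inequality.} Assume $\Con(\ZFC)$. By Corollary~\ref{cor:consistency}, there is a model of $\ZFC$ satisfying \eqref{eq:add-model}, so $\frakb=\aleph_1$ and $\add(\Nho)=\aleph_2$ there. Since $\add(\scrN)\le\frakb$ holds in ZFC, we get $\add(\scrN)\le\aleph_1<\aleph_2=\add(\Nho)$ in that model. This gives the consistency of $\add(\scrN)<\add(\Nho)$.

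\textbf{Second inequality.} Again by Corollary~\ref{cor:consistency}, there is a model satisfying \eqref{eq:cof-model}, so $\cof(\Nho)=\aleph_1$ and $\frakd=\aleph_2$ there. Since $\frakd\le\cof(\scrN)$ holds in ZFC, we get $\cof(\Nho)=\aleph_1<\aleph_2=\frakd\le\cof(\scrN)$. This gives the consistency of $\cof(\Nho)<\cof(\scrN)$.

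I do not expect any real obstacle. The only point to state explicitly is that the two inequalities are witnessed in two separate models, exactly as Corollary~\ref{cor:consistency} provides them; the corollary does not claim a single model where both hold. All the substantive work, namely the Tukey reduction and the two forcing constructions, is contained in Theorem~\ref{thm:main} and Corollary~\ref{cor:consistency}.
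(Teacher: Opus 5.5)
Your proposal is correct and matches the paper's own argument: it derives the corollary from Corollary~\ref{cor:consistency} via the ZFC chain $\add(\scrN)\le\frakb\le\frakd\le\cof(\scrN)$. It reads off $\add(\scrN)\le\frakb=\aleph_1<\add(\Nho)$ in model \eqref{eq:add-model} and $\cof(\Nho)=\aleph_1<\frakd\le\cof(\scrN)$ in model \eqref{eq:cof-model}, using two separate models.
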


In Section~\ref{sec:covering-uniformity}, we use the Schrijver graphs and tournaments
from \cite{girao2024tournament} to choose the finite graphs in the Davies--Rogers construction so that
\[
  \cov(\Nho)\le\non(\Nho).
\]
Thus, for these Davies--Rogers Hausdorff measures, we cannot obtain the consistency of $\non(\Nho)<\cov(\Nho)$.
This is somewhat surprising, since one might expect a countable support iteration of the idealized forcing associated with $\Nho$ to yield a model of $\ZFC$ satisfying $
  \aleph_1=\non(\Nho)<\cov(\Nho)=\frakc=\aleph_2.
$
\section{Standard covers}

We recall the notation.
For each $m\ge1$, fix a finite graph $G(m)$ with chromatic number greater than $m$ such that
\begin{equation}\label{eq:weighted}
  \forall w:G(m)\to\R_{\ge0}\ \exists H\subseteq G(m)\quad
  \left[H\text{ is independent}\ \land\
  \sum_{v\in H}w(v)\ge\frac14\sum_{v\in G(m)}w(v)\right].
\end{equation}
Such graphs exist by \cite{daviesrogers}.
Set
\[
  M_0=1,\qquad G_n=G(M_n),\qquad N_n=|G_n|,\qquad M_{n+1}=2N_n,
  \qquad \Omega=\prod_{n<\omega}G_n.
\]
If $n$ is the first coordinate where $x\ne y$ differ, let
\[
  \rho(x,y)=
  \begin{cases}
    2^{-n+1},&\text{if $x(n)$ and $y(n)$ are adjacent},\\
    2^{-n},&\text{otherwise}.
  \end{cases}
\]
Put $h(0)=0$ and $h(2^{-n})=(\prod_{i\le n}M_i)^{-1}$, interpolate linearly on $[0,1]$, and set $h(r)=r$ for $r\ge1$.

For $s\in\prod_{i<n}G_i$ and a nonempty independent set $H\subseteq G_n$, write
\[
  S(s,H)=\{x\in\Omega:x\restrict n=s,\ x(n)\in H\},
  \qquad
  \Std_n=\{S(s,H):s,H\text{ as above}\}.
\]
These are the \emph{standard sets of rank $n$}.
They are clopen and have diameter $2^{-n}$.
Put
\begin{equation}\label{eq:standard-parameters}
  a_n=h(2^{-n})=\frac1{\prod_{i\le n}M_i},\qquad
  P_n=\prod_{i<n}N_i,\qquad P_na_n=2^{-n}.
\end{equation}
Every set with at least two points and diameter at most $1$ is contained in a standard set of the same diameter \cite{daviesrogers}.
A singleton is contained in standard sets of arbitrarily high rank.
Thus null sets admit standard covers $\mathcal U$ with arbitrarily small $\sum_{S\in\mathcal U}h(\diam S)$ and arbitrarily high minimum rank.

For $\mathcal F = \seq{\mathcal F_n : n \in \omega}$ with $\mathcal F_n \subseteq\Std_n$, put
\[
  N_{\mathcal F}
    =\limsup_n\bigcup\mathcal F_n
    =\bigcap_{m<\omega}\bigcup_{n\ge m}\bigcup\mathcal F_n.
\]
This is a $G_\delta$ set.
We shall use
\begin{equation}\label{eq:summable}
  \sum_n a_n|\mathcal F_n|<\infty
  \quad\Longrightarrow\quad N_{\mathcal F}\in\Nho.
\end{equation}
Indeed, the tail cover $\bigcup_{n\ge m}\mathcal F_n$ satisfies $\sum_{n\ge m}a_n|\mathcal F_n|\to0$ as $m\to\infty$, and every member has diameter at most $2^{-m}$.

\begin{lem}\label{lem:coding}
For every $A\in\Nho$, there is a sequence $\sigma = \seq{\sigma(n) : n \in \omega}$ with $\sigma(n) \subseteq\Std_n$ such that
\[
  A\subseteq N_\sigma,\qquad
  \sum_n a_n|\sigma(n)|\le1.
\]
\end{lem}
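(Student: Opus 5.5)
The plan is to use the standard covers described just before the lemma: every null set admits standard covers of arbitrarily small total weight and arbitrarily high minimum rank. For each $k\ge1$, apply this to $A$ to get a countable standard cover $\mathcal U_k$ of $A$ with
\[
  \sum_{S\in\mathcal U_k}h(\diam S)\le 2^{-k}
\]
and all ranks at least $k$. A member $S\in\Std_n$ has diameter $2^{-n}$, so $h(\diam S)=a_n$. Hence, if $\mathcal U_k^n=\mathcal U_k\cap\Std_n$, we have
\[
  \sum_n a_n|\mathcal U_k^n|\le 2^{-k},
\]
and $\mathcal U_k^n=\emptyset$ for $n<k$. In particular each $\mathcal U_k^n$ is finite. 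One small point: a set may be a standard set for several pairs $(s,H)$, but it has a single rank, since the diameter $2^{-n}$ determines $n$. So I would treat $\mathcal U_k$ as a set of standard sets, each with a well-defined rank.

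Next, set $\sigma(n)=\bigcup_{k\ge1}\mathcal U_k^n\subseteq\Std_n$. Only $k\le n$ contribute, so $\sigma(n)$ is a finite union of finite sets. Then
\[
  \sum_n a_n|\sigma(n)|\le\sum_{k\ge1}\sum_n a_n|\mathcal U_k^n|\le\sum_{k\ge1}2^{-k}=1.
\]

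For the inclusion $A\subseteq N_\sigma$, fix $x\in A$ and $m<\omega$. Since $\mathcal U_m$ covers $x$ (take $m\ge1$), some $S\in\mathcal U_m$ contains $x$. Its rank $n$ satisfies $n\ge m$, and $S\in\sigma(n)$. Hence
\[
  x\in\bigcup_{n\ge m}\bigcup\sigma(n)
\]
for every $m$, that is, $x\in N_\sigma$.

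The only delicate step is justifying that the covers can be taken standard with high rank. I rely on the earlier remarks for this: sets with at least two points and diameter at most $1$ lie in standard sets of the same diameter, and a singleton lies in standard sets of arbitrarily high rank, hence of arbitrarily small $h$-weight. Concretely, start from a cover witnessing $\Haus^h_\Omega(A)=0$ with $\delta\le2^{-k}$ and total sum at most $2^{-k-1}$. Replace each two-point-or-more member by a standard superset of equal diameter; this keeps its $h$-value, and since the diameter is at most $2^{-k}$ the rank is at least $k$. Replace the $i$-th singleton member by a standard set of rank at least $k$ with weight at most $2^{-k-2-i}$. The new total is at most $2^{-k-1}+2^{-k-1}=2^{-k}$.
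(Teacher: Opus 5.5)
Your proposal is correct and is essentially the paper's proof. You take standard covers $\mathcal U_k$ of $A$ with weight at most $2^{-k}$ and all ranks at least $k$, set $\sigma(n)$ to be the rank-$n$ members of their union, and sum the weights geometrically to get at most $1$. Your last paragraph, which replaces a witnessing cover by standard sets, just spells out the remark the paper states before the lemma.
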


\begin{proof}
For each $j$, choose a countable standard cover $\mathcal U_j$ of $A$ such that
\[
  \mathcal U_j\subseteq\bigcup_{n\ge j}\Std_n,
  \qquad \sum_{S\in\mathcal U_j}h(\diam S)\le2^{-j-1}.
\]
Let $\sigma(n)=\Std_n\cap\bigcup_j\mathcal U_j$.
Then
\[
  \sum_n a_n|\sigma(n)|
    \le\sum_j\sum_{S\in\mathcal U_j}h(\diam S)\le1.
\]
Every $x\in A$ belongs to members of these covers of arbitrarily high rank, so $x\in N_\sigma$.
\end{proof}

\begin{lem}\label{lem:compression}
If $\mathcal F_n\subseteq\Std_n$ and $a_n|\mathcal F_n|\le2^n$ for all $n$, then $N_{\mathcal F}\in\Nho$.
\end{lem}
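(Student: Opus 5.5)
The plan is to replace most of each family $\mathcal F_n$ by a small number of rank $n-1$ standard sets, and to do this with a cost $c_n$ that is summable. Summability is enough, by the same tail argument as in \eqref{eq:summable}. Suppose that for every $n\ge1$ we find a family $\mathcal C_n\subseteq\Std_{n-1}\cup\Std_n$ with
\[
  \bigcup\mathcal F_n\subseteq\bigcup\mathcal C_n,\qquad
  c_n:=\sum_{S\in\mathcal C_n}h(\diam S)\le(10n+2)2^{-n}.
\]
Then $\bigcup_{n\ge m}\mathcal C_n$ covers $\bigcup_{n\ge m}\bigcup\mathcal F_n\supseteq N_{\mathcal F}$. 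Its members have diameter at most $2^{-(m-1)}$, and its total cost $\sum_{n\ge m}c_n$ tends to $0$ as $m\to\infty$. Hence $\Haus^h_\Omega(N_{\mathcal F})=0$.

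To build $\mathcal C_n$, fix $t\in\prod_{i<n-1}G_i$; there are $P_{n-1}$ such $t$. Define a weight $w_t:G_{n-1}\to\R_{\ge0}$ by letting $w_t(v)$ be the number of members of $\mathcal F_n$ of the form $S(t\append v,H)$. Put $W_t=\sum_v w_t(v)$, so that $\sum_tW_t=|\mathcal F_n|\le2^n/a_n$.

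Apply \eqref{eq:weighted} to $w_t$ to get an independent $H_1\subseteq G_{n-1}$ carrying at least a quarter of the weight. Then set the weight to $0$ on $H_1$ and apply \eqref{eq:weighted} again to get $H_2$, and so on. After $j$ rounds we have independent sets $H_1,\dots,H_j$, and the vertices outside $H_1\cup\dots\cup H_j$ carry total weight at most $(3/4)^jW_t$. Discard empty $H_i$, which arise only once the remaining weight is $0$.

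The key observation is that for $v\in H_i$, every $S(t\append v,H)\in\mathcal F_n$ is contained in the rank $n-1$ standard set $S(t,H_i)$, since membership only requires $x\restrict(n-1)=t$ and $x(n-1)\in H_i$. So we let $\mathcal C_n$ consist of all the sets $S(t,H_i)$ for $i\le j$, together with the members $S(t\append v,H)\in\mathcal F_n$ with $v\notin\bigcup_iH_i$, taken over all $t$.

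Now estimate the cost using $P_{n-1}a_{n-1}=2^{-(n-1)}$ and $a_n|\mathcal F_n|\le2^n$:
\[
  c_n\le jP_{n-1}a_{n-1}+(3/4)^j\sum_tW_t\,a_n
  \le j\,2^{-(n-1)}+(3/4)^j2^n .
\]
Choosing $j=5n$ gives $(3/4)^{5n}\le4^{-n}$, so $c_n\le10n\,2^{-n}+2^{-n}$, which is summable. The finitely many small $n$, including $n=0$, are irrelevant for the limsup.

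The only delicate point is the middle step. A single application of \eqref{eq:weighted} removes just a fixed fraction $1/4$ of the weight. The iteration works because each extra round costs only $P_{n-1}a_{n-1}=2^{-(n-1)}$, uniformly in $t$, while the leftover decays geometrically. A linear number of rounds therefore absorbs the factor $4^n$ by which $|\mathcal F_n|$ may exceed the "summable" size $2^{-n}/a_n$.
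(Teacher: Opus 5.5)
Your proposal is correct and is essentially the paper's proof: for each prefix $t$ you weight $G_{n-1}$ by the members of $\mathcal F_n$ lying above $t\append v$, apply \eqref{eq:weighted} linearly many times so that most of $\mathcal F_n$ is absorbed into the rank $n-1$ standard sets $S(t,H_i)$, and bound the two parts by $j\,2^{-(n-1)}$ and $(3/4)^j2^n$. The differences are cosmetic: you use $5n$ rounds instead of $4n$, and you run the tail-cover estimate directly instead of splitting $N_{\mathcal F}$ into two limsup sets and invoking \eqref{eq:summable} for each.
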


\begin{proof}
Fix $n\ge1$.
For each $t\in\prod_{i<n-1}G_i$, define
\[
  w_t(v)=\bigl|\{S(s,H)\in\mathcal F_n:s=t\append v\}\bigr|
  \qquad(v\in G_{n-1}).
\]
Apply \eqref{eq:weighted} successively $4n$ times, setting the weights of selected vertices to zero after each step.
This gives independent sets $K_{t,j}$, $j<4n$, with
\[
  \sum_{v\notin\bigcup_{j<4n}K_{t,j}}w_t(v)
    \le\left(\frac34\right)^{4n}\sum_v w_t(v).
\]
Put
\[
  \mathcal R_n
    =\{S(t\append v,H)\in\mathcal F_n:
        t\in\textstyle\prod_{i<n-1}G_i,
        v\in G_{n-1}\setminus\bigcup_{j<4n}K_{t,j}\},
\]
and
\[
  \mathcal Q_{n-1}
    =\{S(t,K_{t,j}):t\in\textstyle\prod_{i<n-1}G_i,\ j<4n,
                         \ K_{t,j}\ne\varnothing\}.
\]
Since
\[
  v\in K_{t,j}\quad\Longrightarrow\quad
  S(t\append v,H)\subseteq S(t,K_{t,j}),
\]
and since $P_na_n=2^{-n}$ implies $|\mathcal F_n|\le 2^n/a_n=4^nP_n$, we have
\begin{align*}
  \bigcup\mathcal F_n
    &\subseteq\bigcup\mathcal R_n\cup\bigcup\mathcal Q_{n-1},\\
  a_n|\mathcal R_n|
    &\le\left(\frac34\right)^{4n}a_n|\mathcal F_n|\\
    &\le\left(\frac34\right)^{4n}4^nP_na_n
      =\left(\frac34\right)^{4n}2^n
      =\left(\frac{81}{128}\right)^n,\\
  a_{n-1}|\mathcal Q_{n-1}|
    &\le4nP_{n-1}a_{n-1}=4n\,2^{-(n-1)}.
\end{align*}
Both bounds on the right are summable in $n$.
Hence \eqref{eq:summable} yields
\[
  N_{\mathcal F}
    \subseteq\limsup_{n\ge1}\bigcup\mathcal R_n
       \ \cup\ \limsup_{n\ge1}\bigcup\mathcal Q_{n-1}\in\Nho.
  \qedhere
\]
\end{proof}

\section{The Tukey reduction}\label{sec:tukey}

For relational systems $\mathbf R=(X,Y,R)$ and $\mathbf R'=(X',Y',R')$, a \emph{Tukey morphism} consists of maps
\[
  \Phi:X\to X',\qquad \Psi:Y'\to Y,
  \qquad \Phi(x)\mathrel{R'}y'\ \Longrightarrow\ x\mathrel R\Psi(y').
\]
We write $\mathbf R\leT\mathbf R'$ when such maps exist.
Then
\[
  \mathfrak b(\mathbf R')\le\mathfrak b(\mathbf R),\qquad
  \mathfrak d(\mathbf R)\le\mathfrak d(\mathbf R'),
\]
where $\mathfrak b$ and $\mathfrak d$ are the least sizes of an unbounded and a dominating family, respectively.
In particular,
\[
  \mathfrak b(\Nho,\Nho,\subseteq)=\add(\Nho),\qquad
  \mathfrak d(\Nho,\Nho,\subseteq)=\cof(\Nho).
\]

Fix the finite alphabets and the width
\begin{equation}\label{eq:parameters}
  D_n=\{\mathcal F\subseteq\Std_n:a_n|\mathcal F|\le1\},
  \qquad D=\seq{D_n:n<\omega},\qquad g(n)=2^n.
\end{equation}
Each $D_n$ contains the empty family and all singleton families.
Since $N_i\ge2$, we have
\begin{equation}\label{eq:nontrivial}
  |D_n|\ge1+|\Std_n|\ge1+P_nN_n>2^n=g(n).
\end{equation}
For finite alphabets we use the same localization notation as for their cardinalities, via fixed bijections.
Explicitly,
\begin{align*}
  \frakv^\forall_{D,g}
    &=\min\{|X|:X\subseteq\textstyle\prod_nD_n,
          \ (\forall\varphi\in\Sl(D,g))(\exists x\in X)\ x\notin^*\varphi\},\\
  \frakc^\forall_{D,g}
    &=\min\{|F|:F\subseteq\Sl(D,g),
          \ (\forall x\in\textstyle\prod_nD_n)(\exists\varphi\in F)\ x\in^*\varphi\}.
\end{align*}

\begin{proof}[Proof of Theorem~\ref{thm:main}]
For each $A\in\Nho$, choose $\sigma_A$ as in Lemma~\ref{lem:coding} and set $\Phi(A)=\sigma_A\in\prod_nD_n$.
For $\varphi\in\Sl(D,g)$, define
\[
  \mathcal F_n^\varphi=\bigcup_{\tau\in\varphi(n)}\tau,
  \qquad
  \Psi(\varphi)=B_\varphi
    :=\limsup_n\bigcup\mathcal F_n^\varphi.
\]
By \eqref{eq:parameters},
\[
  a_n|\mathcal F_n^\varphi|
    \le\sum_{\tau\in\varphi(n)}a_n|\tau|
    \le|\varphi(n)|\le2^n.
\]
Lemma~\ref{lem:compression} gives $\Psi(\varphi)\in\Nho$.
Finally,
\[
  \Phi(A)\in^*\varphi
    \quad\Longrightarrow\quad
  A\subseteq N_{\sigma_A}\subseteq B_\varphi=\Psi(\varphi).
\]
Thus $(\Phi,\Psi)$ is the required Tukey morphism, and both cardinal inequalities follow.
\end{proof}

\section{Small cofinality}

Recall that the Laver property implies, for any ground-model sequence of finite sets $D_n$ and any $g\in\omega^\omega$ with $g(n)\to\infty$,
\begin{equation}\label{eq:laver-property}
  \Vdash_{\Pp}\ (\forall x\in\textstyle\prod_nD_n)
    (\exists\varphi\in\Sl(D,g)^V)\ x\in^*\varphi.
\end{equation}
Here $\Sl(D,g)^V$ denotes the ground-model slaloms.

\begin{prop}\label{prop:laver}
If $\Pp$ has the Laver property, the ground-model Borel null codes remain cofinal in $\Nho$ after forcing with $\Pp$.
\end{prop}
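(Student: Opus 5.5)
In the extension, I will combine the Tukey reduction from the proof of Theorem~\ref{thm:main} with the Laver property \eqref{eq:laver-property}. Let $G$ be $\Pp$-generic over $V$ and let $A\in\Nho$ in $V[G]$. The goal is a Borel null set coded in $V$ whose reinterpretation in $V[G]$ contains $A$.

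First, working in $V[G]$, I apply Lemma~\ref{lem:coding} to get $\sigma_A$ with $A\subseteq N_{\sigma_A}$ and $\sum_n a_n|\sigma_A(n)|\le1$. Then each $\sigma_A(n)$ lies in $D_n$, so $\sigma_A\in\prod_nD_n$. The key point is that $D_n$ and $\Std_n$ are finite and absolute. $\Omega$ is a product of finite graphs fixed in $V$, and a standard set $S(s,H)$ is determined by the finite data $(s,H)$. Hence the sequence $\seq{D_n:n<\omega}$ belongs to $V$ and is the same in $V[G]$.

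Next, since $g(n)=2^n\to\infty$, \eqref{eq:laver-property} gives a slalom $\varphi\in\Sl(D,g)^V$ with $\sigma_A\in^*\varphi$. I form $B_\varphi=\limsup_n\bigcup\mathcal F^\varphi_n$ as in the proof of Theorem~\ref{thm:main}. Here $\mathcal F^\varphi_n=\bigcup_{\tau\in\varphi(n)}\tau$ is computed in $V$, so $B_\varphi$ is a $G_\delta$ set with a code $c_\varphi\in V$. The code is just the sequence of finite families of clopen sets.

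It remains to check that the set coded by $c_\varphi$ is null in $V[G]$ and contains $A$ there. Containment follows exactly as in the proof of Theorem~\ref{thm:main}, applied inside $V[G]$. If $\sigma_A(n)\in\varphi(n)$ for all large $n$, then $\bigcup\sigma_A(n)\subseteq\bigcup\mathcal F^\varphi_n$ for those $n$. So $A\subseteq N_{\sigma_A}\subseteq B_\varphi$, where $B_\varphi$ is evaluated in $V[G]$.

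For nullness I apply Lemma~\ref{lem:compression} in $V[G]$. Its hypothesis $a_n|\mathcal F^\varphi_n|\le2^n$ is a statement about finite objects, so it holds in $V[G]$ because it holds in $V$. The lemma is a $\ZFC$ theorem, so it then yields $B_\varphi\in\Nho$ in $V[G]$. Alternatively, one could note that the proof of Lemma~\ref{lem:compression} produces an explicit covering, via $\mathcal R_n$ and $\mathcal Q_{n-1}$, that is defined in $V$ and remains valid.

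The only step needing care is this absoluteness bookkeeping. I must make sure that ``ground-model Borel null code'' is read as ``a code in $V$ whose interpretation is null in $V$'', and that such codes stay null in the extension. For the codes $c_\varphi$ this is automatic by the previous step. So the family $\{c_\varphi:\varphi\in\Sl(D,g)^V\}$ of ground-model null codes is cofinal in $V[G]$, and a fortiori all ground-model Borel null codes are cofinal.
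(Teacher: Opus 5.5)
Your proposal is correct and follows the paper's proof. You code $A$ by $\sigma_A\in\prod_nD_n$ via Lemma~\ref{lem:coding}, localize $\sigma_A$ by a ground-model slalom $\varphi$ using \eqref{eq:laver-property}, and cover $A$ by $B_\varphi$, which has its code in $V$ and is null in $V[G]$ by Lemma~\ref{lem:compression}. Your absoluteness bookkeeping only expands the paper's one-line remark that the code of $B_\varphi$ lies in $V$. Your closing ``a fortiori'' about arbitrary ground-model null codes would also need every such code to stay null. That does hold, because a ground-model inclusion $B\subseteq N_\sigma$ from Lemma~\ref{lem:coding} is $\Pi^1_1$ and hence absolute, but the subfamily $\{c_\varphi\}$ already suffices.
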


\begin{proof}
In $V[G]$, let $A\in\Nho$.
Choose $\sigma\in\prod_nD_n$ with $A\subseteq N_\sigma$, and use \eqref{eq:laver-property} to find $\varphi\in\Sl(D,g)^V$ with $\sigma\in^*\varphi$.
Then
\[
  A\subseteq N_\sigma\subseteq B_\varphi\in\Nho.
\]
The code of $B_\varphi$ belongs to $V$.
\end{proof}

\begin{proof}[Proof of Corollary~\ref{cor:consistency}, \eqref{eq:cof-model}]
Start with the ground model $V$ satisfying $\CH$ and perform the countable support iteration of Laver forcing of length $\omega_2$.
Laver forcing is proper, has the Laver property, and adds a dominating real; the Laver property is preserved by countable support iterations of proper forcings \cite[Theorem~20.7 and Chapter~23, Related Result~132]{halbeisen2012}.
The resulting model satisfies
\[
  \frakb=\frakd=\frakc=\aleph_2.
\]
Indeed, every family of at most $\aleph_1$ reals is contained in an intermediate model, and the next Laver real dominates it.
By \eqref{eq:laver-property}, all ground-model slaloms together localize $\prod_nD_n$.
There are only $\aleph_1$ such slaloms, so Theorem~\ref{thm:main} gives
\[
  \aleph_1\le\cof(\Nho)\le\frakc^\forall_{D,g}\le\aleph_1.
\]
This proves \eqref{eq:cof-model}.
\end{proof}

\section{Large additivity}

It is well known that $\frakv^\forall_{D,g}$ can be increased by iterating a proper $\omega^\omega$-bounding tree forcing that adds an eventual localizing slalom.
For completeness, we give the construction and proof here, using this forcing to obtain \eqref{eq:add-model}.

In this section, $D_n$ may be any nonempty finite sets and $g\in\omega^\omega$ satisfies
\[
  g(n)\longrightarrow\infty,\qquad 1\le g(n)<|D_n|.
\]
Put $E_n=[D_n]^{\le g(n)}$.
For $X\subseteq E_n$, define
\[
  \nor_n(X)=\min\{|u|:u\subseteq D_n,
                  \ (\forall a\in X)\ u\nsubseteq a\}.
\]
This minimum exists since $g(n)<|D_n|$.
In particular, $\nor_n(\varnothing)=0$ and $\nor_n(E_n)=g(n)+1$.

\begin{lem}\label{lem:norm}
For $X,Y\subseteq E_n$ and $v\in D_n$,
\begin{align}
  \nor_n(X\cup Y)&\le\nor_n(X)+\nor_n(Y),\label{eq:subadd}\\
  \nor_n(\{a\in X:v\in a\})&\ge\nor_n(X)-1.\label{eq:prune}
\end{align}
\end{lem}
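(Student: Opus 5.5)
Both inequalities follow directly from the definition of $\nor_n$ as the least size of a ``witness'' set $u\subseteq D_n$ that no member of the family contains. The plan is to verify each one by building a witness explicitly.

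For \eqref{eq:subadd}, take $u\subseteq D_n$ witnessing $\nor_n(X)$, so $|u|=\nor_n(X)$ and $u\nsubseteq a$ for every $a\in X$. Take $u'$ witnessing $\nor_n(Y)$ in the same way. Then $u\cup u'$ is not contained in any $a\in X\cup Y$. If $a\in X$, then already $u\nsubseteq a$; if $a\in Y$, then $u'\nsubseteq a$. Hence
\[
  \nor_n(X\cup Y)\le|u\cup u'|\le\nor_n(X)+\nor_n(Y).
\]
The minimum defining $\nor_n(X\cup Y)$ exists, as noted after the definition, since $g(n)<|D_n|$. The degenerate cases cause no trouble: if $X=\varnothing$, the witness is $u=\varnothing$ and the bound is trivial.

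For \eqref{eq:prune}, put $X_v=\{a\in X:v\in a\}$ and let $u$ witness $\nor_n(X_v)$. The claim is that $u\cup\{v\}$ witnesses for $X$. Take any $a\in X$. If $v\notin a$, then $u\cup\{v\}\nsubseteq a$ because $v$ itself is missing from $a$. If $v\in a$, then $a\in X_v$, so $u\nsubseteq a$, and hence again $u\cup\{v\}\nsubseteq a$. Therefore
\[
  \nor_n(X)\le|u|+1=\nor_n(X_v)+1,
\]
which rearranges to the stated inequality.

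There is no real obstacle here. The only point needing care is the second case split, where the extra element $v$ handles exactly those sets that the pruning discards.
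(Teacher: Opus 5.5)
Your proof is correct and takes the same approach as the paper. You bound subadditivity by the union of the two witnesses, and the pruning inequality by adjoining $v$ to a witness for $\{a\in X:v\in a\}$. Your case split on whether $v\in a$ just makes explicit the step the paper leaves to the reader.
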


\begin{proof}
For \eqref{eq:subadd}, take the union of witnesses for $X$ and $Y$.
For \eqref{eq:prune}, if $u$ witnesses the left-hand side, then $u\cup\{v\}$ witnesses an upper bound for $\nor_n(X)$.
\end{proof}

\begin{defi}
The forcing $\Q(D,g)$ consists of the nonempty trees without terminal nodes
\[
  T\subseteq\bigcup_{m<\omega}\prod_{n<m}E_n
\]
such that, writing
\[
  \succs_T(t)=\{a\in E_{|t|}:t\append a\in T\},\qquad
  \nu_T(m)=\min_{t\in T\cap\prod_{n<m}E_n}
                         \nor_m(\succs_T(t)),
\]
we have $\nu_T(m)\to\infty$.
The order is inclusion.
For $s\in T$, let $T\restrict s$ consist of the nodes of $T$ comparable with $s$.
\end{defi}

The full tree is a condition, and $T\restrict s\in\Q(D,g)$ whenever $s\in T$.
Restrictions to levels of arbitrarily large height show that a generic filter determines a branch $\dot\varphi\in\prod_nE_n=\Sl(D,g)$.

\begin{lem}\label{lem:localizes}
The generic branch eventually localizes every ground-model real:
\[
  \Vdash_{\Q(D,g)}\ (\forall x\in(\textstyle\prod_nD_n)^V)
                         \ x\in^*\dot\varphi.
\]
\end{lem}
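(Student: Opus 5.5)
The plan is a density argument. Fix a ground-model $x\in\prod_nD_n$ and a condition $T\in\Q(D,g)$. We build a stronger condition $T'\le T$ and find $m<\omega$ such that every branch of $T'$ meets the requirement $x(n)\in\varphi(n)$ for all $n\ge m$, where $\varphi$ denotes the branch. Since $\dot\varphi$ is a branch through every condition in the generic filter, such a $T'$ forces $x\in^*\dot\varphi$. By genericity this gives the lemma.

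First choose $m$ so large that $\nu_T(n)\ge2$ for all $n\ge m$; this is possible because $\nu_T(n)\to\infty$. Then define $T'$ by pruning level by level. It contains all nodes of $T$ of length $\le m$. For a node $t\in T'$ with $|t|=n\ge m$, the successors of $t$ in $T'$ are
\[
  \succs_{T'}(t)=\{a\in\succs_T(t):x(n)\in a\}.
\]
By \eqref{eq:prune} applied with $v=x(n)$,
\[
  \nor_n(\succs_{T'}(t))\ge\nor_n(\succs_T(t))-1\ge\nu_T(n)-1\ge1.
\]
A set $X\subseteq E_n$ with $\nor_n(X)\ge1$ is nonempty, because $\nor_n(\varnothing)=0$. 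So every node of $T'$ has a successor, and $T'$ is a nonempty tree with no terminal nodes.

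For the norm condition, the estimate above gives $\nu_{T'}(n)\ge\nu_T(n)-1$ for $n\ge m$, and this still tends to infinity. Hence $T'\in\Q(D,g)$, and clearly $T'\subseteq T$. Every branch $\varphi$ of $T'$ satisfies $x(n)\in\varphi(n)$ for all $n\ge m$, since each edge above level $m$ was chosen that way. Therefore $T'\Vdash x\in^*\dot\varphi$.

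The only delicate point is making sure the pruned tree has no dead ends, that is, that it is still a condition. This is exactly what the choice $\nu_T(n)\ge2$ beyond $m$ together with \eqref{eq:prune} provides, so I do not expect a real obstacle. The lemma is just one application of the pruning inequality from Lemma~\ref{lem:norm} at each level above $m$.
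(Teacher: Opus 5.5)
Your proof is correct and is essentially the paper's own argument. You choose $m$ with $\nu_T(n)\ge2$ for $n\ge m$, prune above level $m$ to the successors $a$ with $x(n)\in a$, and use \eqref{eq:prune} to get no terminal nodes and $\nu_{T'}(n)\ge\nu_T(n)-1\to\infty$; the only difference is that you also spell out why norm $\ge1$ gives nonemptiness and why $T'$ forces the conclusion, which the paper leaves implicit.
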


\begin{proof}
Fix $x\in(\prod_nD_n)^V$ and $T\in\Q(D,g)$.
Choose $m_0$ such that $\nu_T(m)\ge2$ for $m\ge m_0$.
At each such level, retain only successors $a$ with $x(m)\in a$.
By \eqref{eq:prune}, the resulting tree $S$ has no terminal nodes and
\[
  \nu_S(m)\ge\nu_T(m)-1\longrightarrow\infty.
\]
Thus $S\le T$ forces $(\forall m\ge m_0)\,x(m)\in\dot\varphi(m)$.
\end{proof}

\begin{lem}\label{lem:finite}
Let $k\ge1$, $T\in\Q(D,g)$, and $s\in T$.
Suppose
\[
  \nor_{|t|}(\succs_T(t))\ge2k\qquad(s\subseteq t\in T).
\]
For every dense $\mathcal D\subseteq\Q(D,g)$, there are $S\le T\restrict s$ and a finite $\mathcal A\subseteq\mathcal D$ such that $s\in S$, $\mathcal A$ is predense below $S$, and
\[
  \nor_{|t|}(\succs_S(t))\ge k\qquad(s\subseteq t\in S).
\]
\end{lem}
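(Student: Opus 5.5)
We will prove Lemma~\ref{lem:finite} with the standard ``fusion by rank'' (Baumgartner-style) argument for creature forcings whose norms are subadditive in the sense of \eqref{eq:subadd}. Call a node $t\supseteq s$ of $T$ \emph{good} if there are a tree $S_t\le T\restrict t$ and a finite $\mathcal A_t\subseteq\mathcal D$ such that $t\in S_t$, $\mathcal A_t$ is predense below $S_t$, and $\nor_{|u|}(\succs_{S_t}(u))\ge k$ for all $t\subseteq u\in S_t$. We must show that $s$ is good.

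First, every $t\supseteq s$ lying on a sufficiently long branch is good directly. Pick any $t\in T$, extend $T\restrict t$ to some $R\in\mathcal D$ by density, and choose $u\in R$ extending $t$ high enough that from $u$ on, the norms in $R$ are at least $k$; such a $u$ exists since $\nu_R\to\infty$. We would like all nodes from $u$ upward to count as good. The problem is that below $u$ the tree $R\restrict u$ has singleton successor sets. So we set up the argument differently.

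Define $\operatorname{rk}(t)=0$ if there is $R\in\mathcal D$ with $R\le T\restrict t$ and $\nor_{|v|}(\succs_R(v))\ge k$ for all $t\subseteq v\in R$. Define $\operatorname{rk}(t)\le\alpha+1$ if the set $\{a\in\succs_T(t):\operatorname{rk}(t\append a)\le\alpha\}$ has norm at least $k$. Rank-$0$ nodes are good with $\mathcal A_t=\{R\}$. If $\operatorname{rk}(t)=\alpha+1$, we glue the witnesses $S_{t\append a}$ for the chosen successors $a$. Since $E_{|t|}$ is finite, the union of the finitely many $\mathcal A_{t\append a}$ is finite and predense below the glued tree, and the norm at $t$ is at least $k$ by construction. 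By induction, every ranked node is good.

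The main obstacle is showing that $s$ has a rank; this is where the hypothesis $2k$ is used. Suppose $s$ is unranked. By subadditivity \eqref{eq:subadd}, at every unranked $t$ the successors split into ranked and unranked ones. Each ranked successor has some ordinal rank $\alpha$, and if the ranked successors had norm at least $k$, then, since ranks below a finite set admit a maximum $\alpha$, $t$ would have rank at most $\alpha+1$. Hence the ranked successors have norm less than $k$, so the unranked successors have norm greater than $2k-k=k$. Let $U$ be the subtree of unranked nodes above $s$. Each of its successor sets has norm greater than $k$, and in particular is nonempty, so $U$ has no terminal nodes. Together with $\nu_T\to\infty$, the same subadditivity split shows that $\nu_U\to\infty$ (unranked part $\ge\nu_T(m)-(k-1)$), so $U$ is a condition below $T\restrict s$. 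By density there is $R\le U$ with $R\in\mathcal D$. Pick $v\in R$ high enough that all norms of $R$ above $v$ are at least $k$. Then $v\in U$ is unranked, yet $R\restrict v\le T\restrict v$ witnesses $\operatorname{rk}(v)=0$, a contradiction.

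So $s$ is ranked. The witness $S=S_s$ satisfies $s\in S$, $S\le T\restrict s$, and $\mathcal A=\mathcal A_s$ is finite and predense below $S$. Moreover $S$ is a genuine condition: above the rank-$0$ pieces it coincides with conditions of $\mathcal D$, whose norms tend to infinity, and only finitely many levels are glued.
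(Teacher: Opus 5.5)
Your strategy is the paper's. You prune $T\restrict s$ to the ``bad'' nodes, use \eqref{eq:subadd} with the margin $2k$ versus $k-1$ to see that the pruned tree is still a condition, meet $\mathcal D$ below it, and get a contradiction at a high node $v$. The rank is an extra layer the paper does without: it shows directly that a node whose good successors have norm at least $k$ is good. That extra layer is where your argument breaks. Your rank~$0$ clause requires a witness $R\in\mathcal D$ with $R\le T\restrict t$, but in the final step you offer $R\restrict v$, and nothing puts $R\restrict v$ in $\mathcal D$. The lemma is stated for arbitrary dense $\mathcal D$, not open dense ones, and $R$ itself is not below $T\restrict v$ in general. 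So for a non-open $\mathcal D$ you have not shown that $v$ has rank $0$, and the contradiction does not follow.

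The repair is easy, and there are three ways to make it:
\begin{itemize}
\item Let rank~$0$ at $t$ mean that some $R\le T\restrict t$ with all norms above $t$ at least $k$ lies below some $R'\in\mathcal D$, and take $\mathcal A_t=\{R'\}$. Then $R\restrict v\le R\in\mathcal D$ gives rank $0$ at $v$.
\item Replace $\mathcal D$ at the start by its downward closure, which is open dense. At the end, replace each member of the finite predense family by an element of $\mathcal D$ above it; predensity is preserved.
\item Drop ranks and argue with goodness directly, as the paper does. The witness tree in your own definition of ``good'' need not belong to $\mathcal D$, so $R\restrict v$ together with $\mathcal A=\{R\}$ shows that $v$ is good. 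This contradicts the fact that the pruned tree consists of non-good nodes. Your gluing step already proves that a node whose good successors have norm at least $k$ is good, so no rank induction is needed.
\end{itemize}
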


\begin{proof}
Call $t\supseteq s$ \emph{good} if there are a condition $S_t\le T\restrict t$ containing $t$ and a finite $\mathcal A_t\subseteq\mathcal D$ predense below $S_t$, with all norms above $t$ at least $k$.
The requirement $S_t\in\Q(D,g)$ includes divergence of its level norms.

Let $G_t=\{a\in\succs_T(t):t\append a\text{ is good}\}$.
Finite unions of the witnessing trees give
\[
  \nor_{|t|}(G_t)\ge k\quad\Longrightarrow\quad t\text{ is good}.
\]
If $s$ is not good, retain only non-good successors above $s$, obtaining $T^-$.
For every remaining $t\supseteq s$, \eqref{eq:subadd} gives
\[
  \nor_{|t|}(\succs_{T^-}(t))
    \ge\nor_{|t|}(\succs_T(t))-(k-1).
\]
Hence $T^-\in\Q(D,g)$.
Choose $R\le T^-$ in $\mathcal D$, and take $t\supseteq s$ sufficiently high in $R$ that all norms of $R$ above $t$ are at least $k$.
Then $R\restrict t$, together with $\{R\}$, witnesses that $t$ is good, a contradiction.
\end{proof}

\begin{prop}\label{prop:proper-bounding}
$\Q(D,g)$ is proper and $\omega^\omega$-bounding.
\end{prop}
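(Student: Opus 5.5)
We will prove both properties with a single fusion argument built on Lemma~\ref{lem:finite}. The stronger statement to aim for is this: for every condition $T$ and every sequence $\seq{\mathcal D_i:i<\omega}$ of dense sets, there is $S\le T$ such that, for each $i$, some finite $\mathcal A_i\subseteq\mathcal D_i$ is predense below $S$. Properness follows by taking $\mathcal D_i$ to list the dense sets of a countable elementary submodel $M$ containing $T$. If $T\in M$, the construction can be carried out inside $M$ step by step, so each finite $\mathcal A_i$ lies in $M$ and hence $\mathcal A_i\subseteq M$. It follows that $S$ is $(M,\Q(D,g))$-generic. For $\omega^\omega$-bounding, let $\dot f$ name a real and put $\mathcal D_i=\{R:R\text{ decides }\dot f(i)\}$. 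The finitely many values decided by members of $\mathcal A_i$ then give a ground-model bound for $\dot f(i)$ below $S$.

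We first make two preliminary observations. Since $\nu_T(m)\to\infty$, we may first pass to $T\restrict s_0$ for a suitable stem, or simply choose levels $m_0<m_1<\cdots$, so that all norms at levels $\ge m_j$ are at least $2^{j+1}$. The restriction $T\restrict s$ is a condition. Moreover, if we replace $T\restrict s$ by some $S_s\le T\restrict s$ for each node $s$ of a fixed level, then the amalgamation $\bigcup_s S_s$ is again a condition below $T$. Its level norms are the minima over finitely many trees, so they still diverge. A finite union of finite predense families is predense below such an amalgamation.

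The fusion itself runs as follows. Set $T_0=T$ with the level $m_0$ chosen as above. At stage $i$ we have $T_i$ whose norms at levels $\ge m_i$ are at least $2k_i$, where $k_i\to\infty$ (for example $k_i=2^i$). Keep levels below $m_i$ frozen. For each of the finitely many nodes $s\in T_i$ of height $m_i$, apply Lemma~\ref{lem:finite} with $k=k_i$ and the dense set $\mathcal D_i$. This yields $S_s\le T_i\restrict s$ and a finite $\mathcal A_{i,s}\subseteq\mathcal D_i$ that is predense below $S_s$, with all norms above $s$ at least $k_i$.

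Amalgamate these trees into $T_i'$ and let $\mathcal A_i=\bigcup_s\mathcal A_{i,s}$. Next choose $m_{i+1}>m_i$ so that all norms of $T_i'$ at levels $\ge m_{i+1}$ are at least $2k_{i+1}$; this is possible because $T_i'$ is a condition. Set $T_{i+1}=T_i'$. Then $T_{i+1}$ agrees with $T_i$ up to level $m_i$, since each $s$ is retained and $S_s\le T_i\restrict s$.

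Finally let $S=\bigcap_iT_i$. Levels of $S$ stabilize, so $S$ is a tree without terminal nodes. For $m\in[m_i,m_{i+1})$, every node of $S$ at height $m$ has its successor set equal to that in $T_{i+1}$, which has norm at least $k_i$. Hence $\nu_S\to\infty$, and $S\le T_{i+1}$ for every $i$. Each $\mathcal A_i$ is therefore predense below $S$.

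The main obstacle is bookkeeping the norm loss: Lemma~\ref{lem:finite} halves the lower bound. This is handled by choosing $m_{i+1}$ only after $T_i'$ is built, so that norms there are at least $2k_{i+1}$. One also has to check that freezing levels below $m_i$ does not undo the earlier predensity. It does not, since later trees are subtrees.
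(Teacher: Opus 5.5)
Your proof is correct and follows the paper's own argument essentially step for step. You run a fusion that applies Lemma~\ref{lem:finite} below every node at a fixed height $m_i$, amalgamate the resulting trees, and choose the next height only afterwards so that the norms are again at least $2k_{i+1}$; you then read off $\omega^\omega$-bounding from the finite predense sets $\mathcal A_i$ deciding $\dot f(i)$, and properness from carrying out the construction inside $M$ so that $\mathcal A_i\subseteq\mathcal D_i\cap M$. The only difference is cosmetic: you take $k_i=2^i$ where the paper takes $k=j+1$.
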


\begin{proof}
We first describe the fusion.
Given $T_0$ and dense sets $\seq{\mathcal D_j:j<\omega}$, recursively choose $T_{j+1}\le T_j$ and strictly increasing heights $\ell_j$ with
\[
  \nu_{T_j}(m)\ge2(j+1)\qquad(m\ge\ell_j).
\]
Apply Lemma~\ref{lem:finite} with $k=j+1$ below every node at level $\ell_j$, and take the union of the resulting trees.
Then
\begin{align*}
  T_{j+1}\cap\bigcup_{m\le\ell_j}\prod_{n<m}E_n
    &=T_j\cap\bigcup_{m\le\ell_j}\prod_{n<m}E_n,\\
  \nu_{T_{j+1}}(m)&\ge j+1\qquad(m\ge\ell_j),
\end{align*}
and some finite $\mathcal A_j\subseteq\mathcal D_j$ is predense below $T_{j+1}$.
Therefore
\[
  T_\infty=\bigcap_jT_j\in\Q(D,g),\qquad
  \nu_{T_\infty}(m)\ge j+1
       \quad(\ell_j\le m<\ell_{j+1}).
\]

For bounding, let $\dot f$ be a name for an element of $\omega^\omega$ and take $\mathcal D_j$ to be the conditions deciding $\dot f(j)$.
Writing $F_j$ for the finite set of values decided by $\mathcal A_j$, we obtain
\[
  T_\infty\Vdash (\forall j)\ \dot f(j)\in F_j.
\]
Thus $j\mapsto\max F_j$ is a ground-model bound.

For properness, fix a countable $M\prec H_\theta$ with $\Q(D,g),T_0\in M$, and enumerate the dense subsets of $\Q(D,g)$ belonging to $M$ as $\seq{\mathcal D_j:j<\omega}$.
Carry out the same construction with $T_j,\mathcal A_j\in M$ at each finite stage.
Since $\mathcal A_j$ is finite, $\mathcal A_j\subseteq\mathcal D_j\cap M$.
Consequently, every $\mathcal D_j\cap M$ is predense below $T_\infty$, so $T_\infty$ is $M$-generic.
\end{proof}

\begin{proof}[Proof of Corollary~\ref{cor:consistency}, \eqref{eq:add-model}]
Use the parameters from \eqref{eq:parameters}; \eqref{eq:nontrivial} verifies the hypotheses on $D,g$.
Starting with $\CH$, form the countable support iteration
\[
  \seq{\Pp_\alpha,\dot\Q_\alpha:\alpha<\omega_2},\qquad
  \Vdash_{\Pp_\alpha}\ \dot\Q_\alpha=\Q(D,g).
\]
Every iterand has size at most $\frakc$ and adds a real: a ground-model slalom cannot localize all ground-model members of $\prod_nD_n$, since $|D_n|>g(n)$.
By the standard iteration theorems \cite[Theorems~20.3, 20.5, and 20.6]{halbeisen2012}, cardinals are preserved and
\[
  V[G_{\omega_2}]\models
  \frakb=\frakd=\aleph_1,\qquad \frakc=\aleph_2.
\]
Here the ground-model dominating family remains dominating by $\omega^\omega$-bounding.

Let $X\subseteq\prod_nD_n$ have size at most $\aleph_1$ in the final model.
For some $\alpha<\omega_2$, $X\subseteq V[G_\alpha]$.
Lemma~\ref{lem:localizes} gives
\[
  (\forall x\in X)\ x\in^*\varphi_\alpha,
\]
where $\varphi_\alpha$ is the slalom added at stage $\alpha$.
This relation remains true in the final model.
Hence
\[
  \aleph_2\le\frakv^\forall_{D,g}
      \le\add(\Nho)\le\frakc=\aleph_2,
\]
by Theorem~\ref{thm:main}.
This proves \eqref{eq:add-model}.
\end{proof}

\section{Covering and uniformity}\label{sec:covering-uniformity}

We choose the graphs $G(m)$ so that $\cov(\Nho)\le\non(\Nho)$.

\begin{lem}\label{lem:relations-covering}
For a graph $G$ and $H\subseteq V(G)$, let $G[H]$ denote the subgraph of $G$ induced by $H$.
Write $\chi(G)$ for the chromatic number of $G$.
Suppose that $R_n\subseteq G_n\times G_n$ and $r_n\in\omega$ satisfy
\[
  uR_nv\ \lor\ vR_nu\quad(u,v\in G_n),\qquad
  \chi(G_n[R_n(u)])\le r_n\quad(u\in G_n),\qquad
  \sum_n r_n2^{-n}<\infty,
\]
where $R_n(u)=\{v\in G_n:uR_nv\}$.
Then there is a $G_\delta$ set $B\subseteq\Omega^2$ such that
\begin{equation}\label{eq:null-sections}
  B_x\in\Nho,\qquad \Omega\setminus B^y\in\Nho
  \quad(x,y\in\Omega),
\end{equation}
where $B_x=\{y:(x,y)\in B\}$ and $B^y=\{x:(x,y)\in B\}$.
Consequently, $\cov(\Nho)\le\non(\Nho)$.
\end{lem}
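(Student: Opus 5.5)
The plan is to build $B$ as a $\limsup$ of clopen sets. At each level $n$, the pair $(x,y)$ should fall into $B$ at level $n$ when $x\restrict n=y\restrict n$ and $x(n)\,R_n\,y(n)$. Put
\[
  C_n=\{(x,y)\in\Omega^2: x\restrict n=y\restrict n,\ x(n)\mathrel{R_n}y(n)\},\qquad B=\limsup_n C_n .
\]
Each $C_n$ is clopen, so $B$ is $G_\delta$.

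First, fix $x$ and consider the vertical section. We have
\[
  (C_n)_x=\{y: y\restrict n=x\restrict n,\ y(n)\in R_n(x(n))\}.
\]
Color $G_n[R_n(x(n))]$ with at most $r_n$ colors. The color classes $H_1,\dots,H_{r_n}$ are independent in $G_n$, so $(C_n)_x$ is covered by the standard sets $S(x\restrict n,H_i)$, each of rank $n$. Call this family $\mathcal F_n$; it has $|\mathcal F_n|\le r_n$. Then $B_x\subseteq N_{\mathcal F}$, and $\sum_n a_n r_n<\infty$ holds because $a_n\le 2^{-n}$ (as $P_n\ge1$) and $\sum_n r_n2^{-n}<\infty$. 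By \eqref{eq:summable}, $B_x\in\Nho$. This step is routine.

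Next, fix $y$ and consider the complement of the horizontal section. Take $x\notin B^y$ with $x\ne y$. For all large $n$ we have $x\restrict n\ne y\restrict n$ or not $x(n)R_ny(n)$. At the first difference coordinate $n_0$, totality of $R_{n_0}$ forces $y(n_0)R_{n_0}x(n_0)$. So $x$ lies in the set $Z_{n_0}$ of points agreeing with $y$ below $n_0$ and having $x(n_0)\in R_{n_0}(y(n_0))$, excluding those with $x(n_0)R_{n_0}y(n_0)$. The difficulty is that this is a single level rather than a limsup, so a single $n_0$ does not give nullness.

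The fix I would try is to redefine $B$ by comparing along a sequence of nested blocks, or to use the symmetric trick
\[
  B=\{(x,y): x=y\}\cup\limsup_n C_n' ,\qquad C_n'=\{(x,y): x\restrict n\ne y\restrict n\ \text{or}\ \dots\}.
\]
Simpler, and I expect this to be correct, is to let $B$ consist of the pairs $(x,y)$ such that, with $n$ the first coordinate where $x\ne y$, we have $x(n)R_ny(n)$, together with the diagonal. Then $\Omega\setminus B^y$ is the set of $x$ whose first difference $n$ has $y(n)R_nx(n)$ but not $x(n)R_ny(n)$. It is therefore contained in
\[
  \bigcup_{n\ge m}\{x:x\restrict n=y\restrict n,\ x(n)\in R_n(y(n))\}\cup\{\text{points agreeing with }y\text{ below }m\}.
\]
The same chromatic covering gives cost $\sum_{n\ge m}a_nr_n$, plus the cylinder $\{x: x\restrict m=y\restrict m\}$. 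That cylinder is covered by a single standard set of rank $m$, namely one vertex $\{y(m)\}$ at rank $m$, at cost $a_m\to0$. So $\Omega\setminus B^y$ is null. Sections $B_x$ are treated symmetrically via $R_n(x(n))$. This $B$ is the union of countably many clopen-difference sets and need not be $G_\delta$, so the main obstacle is arranging Borel complexity. I would check that the first-difference set is in fact open off the diagonal, making $B$ the union of an open set and a closed set. Failing that, I would take a $G_\delta$ superset $B'$ of $B$ with null vertical sections, built as an intersection over $m$ of the open covers used above, each with cost $2^{-m}$.

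Finally, derive $\cov(\Nho)\le\non(\Nho)$ in the standard way. Let $Y\notin\Nho$. Then $\{B_x\}_{x\in Y}$ covers $\Omega$. Indeed, given $y$, the set $\Omega\setminus B^y$ is null, so $Y\not\subseteq\Omega\setminus B^y$, which gives some $x\in Y$ with $y\in B_x$.
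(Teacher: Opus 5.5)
There is a genuine gap: neither of your candidate sets $B$ satisfies \eqref{eq:null-sections}. Your first set $\limsup_n C_n$ lies inside the diagonal, because for $x\ne y$ the condition $x\restrict n=y\restrict n$ fails for all large $n$. So $\Omega\setminus B^y\supseteq\Omega\setminus\{y\}$, which is not null.

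Your second set (the first-difference relation plus the diagonal) fails for a more basic reason: its sections are unions of whole cylinders, and cylinders are not null. All cylinders of a given length are isometric, finitely many of them cover $\Omega$, and $\Haus^h_\Omega(\Omega)=\infty$. Concretely, totality and $|G_n|\ge2$ give $u\ne v$ in $G_n$ with $uR_nv$. Then for every $x$ with $x(n)=u$, the section $B_x$ contains the cylinder $\{y:y\restrict n=x\restrict n,\ y(n)=v\}$. Likewise, $\Omega\setminus B^y$ contains $\{x:x\restrict n=y\restrict n,\ x(n)=v\}$ whenever $v\ne y(n)$ and $\neg\,vR_ny(n)$. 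So "treated symmetrically" fails for $B_x$ as well.

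The error is hidden in your displayed containment for $\Omega\setminus B^y$. Both pieces on the right consist of points agreeing with $y$ below $m$, so you silently discard every $x$ whose first difference with $y$ occurs at a level $n<m$. Those pieces form a union over levels rather than a $\limsup$, and nothing makes them small. The Borel-complexity question is therefore beside the point, and passing to a $G_\delta$ superset cannot repair the vertical sections.

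The missing idea is to drop the prefix condition entirely and compare coordinates at every level:
\[
B=\{(x,y):(\exists^\infty n)\ x(n)R_ny(n)\}.
\]
This is $G_\delta$ since each coordinate relation is clopen. Then $B_x=\limsup_n\{y:y(n)\in R_n(x(n))\}$. Level $n$ is covered by the standard sets $S(s,H)$ with $s$ ranging over \emph{all} of $\prod_{i<n}G_i$ and $H$ over the at most $r_n$ color classes of $G_n[R_n(x(n))]$. That is at most $P_nr_n$ sets, of total cost $a_nP_nr_n=r_n2^{-n}$, which is exactly what the hypothesis $\sum_nr_n2^{-n}<\infty$ pays for. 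Your bound $\sum_na_nr_n$ was a sign that restricting to the single prefix $x\restrict n$ was too strong. So $B_x\in\Nho$ by \eqref{eq:summable}.

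The horizontal condition then follows from totality applied at all levels at once, rather than at a single first-difference level. If $x\notin B^y$, then $\neg\,x(n)R_ny(n)$ for almost all $n$, hence $y(n)R_nx(n)$ for almost all $n$, so $x\in B_y$. Thus $\Omega\setminus B^y\subseteq B_y\in\Nho$.

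Your final derivation of $\cov(\Nho)\le\non(\Nho)$ from \eqref{eq:null-sections} is correct.
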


\begin{proof}
Define
\[
  B=\{(x,y)\in\Omega^2:(\exists^\infty n)\ x(n)R_ny(n)\}.
\]
Each coordinate relation is clopen, so $B$ is $G_\delta$.
Fix $x\in\Omega$ and partition each $R_n(x(n))$ into nonempty independent
sets $H_{n,j}$, $j<\ell_n\le r_n$.
Put
\[
  \mathcal F_n(x)=
    \{S(s,H_{n,j}):s\in\textstyle\prod_{i<n}G_i,\ j<\ell_n\}.
\]
By \eqref{eq:standard-parameters},
\[
  B_x=N_{\mathcal F(x)},\qquad
  a_n|\mathcal F_n(x)|\le a_nP_nr_n=r_n2^{-n}.
\]
Thus $B_x\in\Nho$ by \eqref{eq:summable}.
Moreover,
\[
  x\notin B^y
  \ \Longrightarrow\ (\forall^\infty n)\ \neg(x(n)R_ny(n))
  \ \Longrightarrow\ (\forall^\infty n)\ y(n)R_nx(n)
  \ \Longrightarrow\ x\in B_y.
\]
Hence $\Omega\setminus B^y\subseteq B_y\in\Nho$.
Choose $X\subseteq\Omega$ with $X\notin\Nho$ and $|X|=\non(\Nho)$.
For every $y\in\Omega$, \eqref{eq:null-sections} gives $X\cap B^y\ne\varnothing$.
Therefore
\[
  \Omega=\bigcup_{x\in X}B_x,\qquad
  \cov(\Nho)\le|X|=\non(\Nho).\qedhere
\]
\end{proof}

\begin{lem}\label{lem:graphs-covering}
For each $m\ge1$, there are a finite graph $G$ satisfying
$\chi(G)>m$ and \eqref{eq:weighted}, a relation $R\subseteq G\times G$, and
independent sets $H(u)\subseteq G$, $u\in G$, such that
\begin{align}
  &uRv\ \lor\ vRu,\qquad \chi(G[R(u)])\le3
       &&(u,v\in G),\label{eq:graph-relation}\\
  &u\in H(u),\qquad
    |\{v\in G:u\in H(v)\}|\ge\tfrac14|G|
       &&(u\in G).\label{eq:graph-independent}
\end{align}
\end{lem}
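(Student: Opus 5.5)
The plan is to take $G$ to be a Schrijver graph $SG(p,k)$ with $p$ prime and $k=\lceil p/4\rceil$. Here the vertices are the stable $k$-subsets of the cycle $\mathbb Z_p$, meaning subsets with no two cyclically consecutive elements, and two vertices are adjacent iff they are disjoint. Schrijver's theorem gives $\chi(G)=p-2k+2$, which is roughly $p/2$. So for large $p$ we get $\chi(G)>m$, and $p\ge 2k$ holds.

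The natural independent sets are the stars $I_i=\{A\in G:i\in A\}$ for $i\in\mathbb Z_p$, since any two members meet. For \eqref{eq:weighted}, I average over $i$. We have
\[
  \sum_{i}\sum_{A\in I_i}w(A)=k\sum_A w(A),
\]
so some star carries weight at least $\tfrac kp\sum_A w(A)\ge\tfrac14\sum_A w(A)$, because $k\ge p/4$.

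For the sets $H(u)$, I will choose $H(A)=I_{i_A}$ for some $i_A\in A$, which ensures $u\in H(u)$. The question is then how many $B$ satisfy $A\in H(B)$, i.e.\ $i_B\in A$. Since $p$ is prime and $0<k<p$, the rotation group $\mathbb Z_p$ acts freely on $k$-subsets and preserves stability. I will pick an orbit representative $B_0$ of each orbit, fix $i_{B_0}\in B_0$ arbitrarily, and transport the choice equivariantly by $i_{B_0+r}=i_{B_0}+r$. Each orbit then contributes exactly one vertex to each class $\{B:i_B=i\}$, so every class has size $|G|/p$. Consequently
\[
  |\{B:i_B\in A\}|=k|G|/p\ge|G|/4
\]
for every $A$, which is \eqref{eq:graph-independent}.

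The remaining and main step is \eqref{eq:graph-relation}. We need a relation $R$ that contains one orientation of every pair (a tournament, possibly with loops), such that every out-neighbourhood $R(u)$ induces a $3$-colourable subgraph of $G$. My plan is to invoke the construction of \cite{girao2024tournament}, which orients Schrijver graphs (or their vertex sets) so that every out-neighbourhood has chromatic number at most $3$ (or a fixed constant). If only a constant $c$ is available there, I would expect the argument to go through with $c$ in place of $3$, since only $\sum_n r_n2^{-n}<\infty$ matters in Lemma~\ref{lem:relations-covering}. I would first check that their parameter regime allows $p\approx 4k$ with $p$ prime. If it does not, I would try a direct cyclic orientation: say $A R B$ when the first element of $B$ after a canonical point of $A$, in cyclic order, lies in a short arc. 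Then $R(A)$ is a union of boundedly many sets of the form ``all members meet a given short arc'', and each such set is covered by a few stars, hence has bounded chromatic number. This verification, obtaining a full tournament while keeping the out-neighbourhoods coloured by few stars, is where I expect the real difficulty to lie.
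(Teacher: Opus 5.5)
You verify $\chi(G)>m$, \eqref{eq:weighted} (by averaging over the stars) and \eqref{eq:graph-independent} (by the equivariant choice of $i_A$ under the free $\mathbb Z_p$-action) correctly. However, \eqref{eq:graph-relation}, which is the real content of the lemma, is left unproved, and your choice of parameters blocks the citation you intend to use. As the paper uses it, \cite{girao2024tournament} is a statement about $\operatorname{SG}(2k+c-2,k)$ with $c$ fixed and $k$ sufficiently large. In that regime $k/q$ is close to $\tfrac12$ and $\chi=c$ stays bounded. Your graphs $\operatorname{SG}(p,\lceil p/4\rceil)$ have $p-2k\approx p/2\to\infty$, so they lie outside that regime, and nothing you cite gives them a tournament with $3$-colourable out-neighbourhoods. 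Your fallback cyclic orientation is only a sketch. The canonical point is unspecified, and you verify neither that every pair $A\neq B$ is related in at least one direction nor that each $R(A)$ is covered by a bounded number of stars; that verification is exactly the difficulty. Note also that $R$ must be reflexive, because \eqref{eq:graph-relation} includes the case $u=v$.

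The restriction $k\approx p/4$ is self-imposed and unnecessary. Condition \eqref{eq:weighted} only needs $k/q\ge\tfrac14$, and in the regime of \cite{girao2024tournament}, with $q=2k+c-2$, $c\ge\max\{3,m+1\}$ and $k>c-2$, one already has $k/q>\tfrac13$. So take $S=\operatorname{SG}(q,k)$ with its tournament $T$ satisfying $\chi(S[N_T^+(A)])\le2$, and put $R(A)=\{A\}\cup N_T^+(A)$; this gives the bound $3$.

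Your equivariant choice of $i_A$ still works in that setting. Rotations act freely on $k$-subsets of $\mathbb Z_q$ as soon as $\gcd(q,k)=\gcd(c-2,k)=1$ (primality of $q$ is not needed), and $k$ can be chosen that way. The paper avoids this arithmetic condition differently. It replaces each $A$ by $k$ pairwise non-adjacent copies $(A,i)$ and sets $H(A,i)=J_{a_i}$, where $a_i$ is the $i$-th element of $A$. It then gets the exact count $\tfrac kq|V(G)|$ using only that $|\{A:r\in A\}|$ is independent of $r$. The blow-up preserves $\chi$ and \eqref{eq:weighted}, and $R(A,i)=(\{A\}\cup N_T^+(A))\times\{0,\ldots,k-1\}$.
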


\begin{proof}
Fix an integer $c\ge\max\{3,m+1\}$.
By the construction in \cite[Theorem~2 and its proof]{girao2024tournament},
for all sufficiently large $k$ the Schrijver graph
$S=\operatorname{SG}(2k+c-2,k)$ admits a tournament $T$ on its vertex set with
\[
  \chi(S)=c,\qquad
  \chi(S[N_T^+(A)])\le2\quad(A\in V(S)),
\]
where $A\longrightarrow_T B$ means that $T$ has a directed edge from $A$ to $B$,
and $N_T^+(A)=\{B:A\longrightarrow_T B\}$.
Choose such a $k>c-2$ and put $q=2k+c-2$.
Write $u\sim_G v$ when $u$ and $v$ are adjacent in a graph $G$.
Explicitly,
\[
  \begin{gathered}
    V(S)=\{A\subseteq\{0,\ldots,q-1\}:|A|=k,
                  \ (\forall r\in A)\ (r+1)\bmod q\notin A\},\\
    A\sim_S B\ \Longleftrightarrow\ A\cap B=\varnothing.
  \end{gathered}
\]
Define
\[
  V(G)=V(S)\times\{0,\ldots,k-1\},\qquad
  (A,i)\sim_G(B,j)\ \Longleftrightarrow\ A\cap B=\varnothing.
\]
Projection onto $S$ and the induced copy $V(S)\times\{0\}$ give
$\chi(G)=\chi(S)=c>m$.
For $r<q$, the set
\[
  J_r=\{(A,i)\in V(G):r\in A\}
\]
is independent. For every $w:V(G)\to\R_{\ge0}$,
\[
  \frac1q\sum_{r<q}\sum_{u\in J_r}w(u)
    =\frac{k}{q}\sum_{u\in V(G)}w(u),\qquad
  \frac{k}{q}>\frac13>\frac14,
\]
so some $J_r$ witnesses \eqref{eq:weighted}.

Write $A=\{a_0<\cdots<a_{k-1}\}$ and set $H(A,i)=J_{a_i}$.
Then $(A,i)\in H(A,i)$.
Cyclic rotation of $\{0,\ldots,q-1\}$ preserves $V(S)$, whence
\[
  |\{A\in V(S):r\in A\}|=\frac{k}{q}|V(S)|\quad(r<q).
\]
For each $(B,j)\in V(G)$, it follows that
\begin{align*}
  |\{(A,i)\in V(G):(B,j)\in H(A,i)\}|
    &=\sum_{r\in B}|\{A\in V(S):r\in A\}|\\
    &=\frac{k^2}{q}|V(S)|
      =\frac{k}{q}|V(G)|>\frac14|V(G)|.
\end{align*}
Thus \eqref{eq:graph-independent} holds.
Finally, put
\[
  (A,i)R(B,j)\ \Longleftrightarrow\ A=B\ \lor\ A\longrightarrow_T B.
\]
Since $T$ is a tournament, $uRv\lor vRu$ for all $u,v\in V(G)$, including $u=v$.
Also,
\[
  R(A,i)=(\{A\}\cup N_T^+(A))\times\{0,\ldots,k-1\},\qquad
  \chi(G[R(A,i)])\le1+\chi(S[N_T^+(A)])\le3.\qedhere
\]
\end{proof}

\begin{thm}\label{thm:covering-uniformity}
There is a choice of the graphs $G(m)$ in the Davies--Rogers construction,
satisfying \eqref{eq:weighted}, \eqref{eq:graph-relation}, and
\eqref{eq:graph-independent}, for which
\[
  \cov(\Nho)\le\non(\Nho).
\]
\end{thm}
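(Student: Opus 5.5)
The plan is to combine Lemma~\ref{lem:graphs-covering} with Lemma~\ref{lem:relations-covering}. For each $m\ge1$, use Lemma~\ref{lem:graphs-covering} to fix a finite graph $G(m)$ with $\chi(G(m))>m$ and property \eqref{eq:weighted}, together with a relation $R^{(m)}$ and independent sets $H^{(m)}(u)$ satisfying \eqref{eq:graph-relation} and \eqref{eq:graph-independent}. These are the graphs fed into the Davies--Rogers construction. The sequences $M_n$, $G_n=G(M_n)$, $N_n$, $\Omega$, $\rho$ and $h$ are then defined as in Section~2. All earlier results (standard sets, \eqref{eq:summable}, and so on) used only $\chi(G(m))>m$ and \eqref{eq:weighted}, so they remain valid for this choice.

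Next, put $R_n=R^{(M_n)}\subseteq G_n\times G_n$ and $r_n=3$, and check the hypotheses of Lemma~\ref{lem:relations-covering}.
\begin{itemize}
\item Totality, $uR_nv\lor vR_nu$, and the bound $\chi(G_n[R_n(u)])\le3$ are exactly \eqref{eq:graph-relation}.
\item The summability condition is $\sum_n 3\cdot 2^{-n}=6<\infty$.
\end{itemize}
Lemma~\ref{lem:relations-covering} then yields the $G_\delta$ set $B\subseteq\Omega^2$ with null vertical sections and co-null horizontal sections. Its final paragraph gives $\cov(\Nho)\le\non(\Nho)$ directly.

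The one point needing care is a side condition in the proof of Lemma~\ref{lem:relations-covering}: the sets $H_{n,j}$ partitioning $R_n(x(n))$ must be nonempty independent sets. This is fine, since a proper colouring with at most $3$ colours, with empty classes discarded, gives $\ell_n\le3$ nonempty independent classes. Also $R_n(u)\ni u$ is nonempty, because the relation in Lemma~\ref{lem:graphs-covering} is reflexive.

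Condition \eqref{eq:graph-independent} is not needed for this inequality. It is carried along only because the theorem asserts that the chosen graphs satisfy it, and it holds by Lemma~\ref{lem:graphs-covering}. I expect no real obstacle here: the substantive work is in the two preceding lemmas, and this step is just matching the parameters $r_n=3$ and checking the summability.
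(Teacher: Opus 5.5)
Your proposal is correct and matches the paper's proof. The paper also chooses $G(m)$ and $R(m)$ via Lemma~\ref{lem:graphs-covering}, sets $R_n=R(M_n)$ and $r_n=3$, and applies Lemma~\ref{lem:relations-covering} using $\sum_n 3\cdot 2^{-n}=6<\infty$; your side remarks (discarding empty colour classes, reflexivity of $R$, and that \eqref{eq:graph-independent} is not used) are accurate.
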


\begin{proof}
Choose $G(m)$ and $R(m)$ as in Lemma~\ref{lem:graphs-covering}.
With $G_n=G(M_n)$ and $R_n=R(M_n)$, Lemma~\ref{lem:relations-covering} applies:
\[
  r_n=3\quad(n<\omega),\qquad \sum_{n<\omega}3\cdot2^{-n}=6<\infty.\qedhere
\]
\end{proof}

\begin{rmk}
We do not know whether $\cov(\Nho)\le\non(\Nho)$ holds for every
Hausdorff measure obtained by the Davies--Rogers construction.
\end{rmk}

\section*{Declaration of generative AI}
During the preparation of this work, the author used OpenAI Codex with
the \texttt{gpt-5.6-sol} and \texttt{gpt-6-astra} models to assist with
literature searches, the discovery and development of proofs, language
editing, and \LaTeX{} drafting. The author independently checked,
reviewed, and revised all AI-assisted material and takes full
responsibility for the content of the paper.

\printbibliography

@article{fremlin2008measure,
	title={Measure theory, vol. 5},
	author={Fremlin, D. H.},
	journal={Set-Theoretic Measure Theory, Parts I, II. Torres Fremlin, Colchester},
	year={2008}
}

@article{howroyd1995dimension,
	title={On dimension and on the existence of sets of finite positive Hausdorff measure},
	author={Howroyd, J. D.},
	journal={Proceedings of the London Mathematical Society},
	volume={3},
	number={3},
	pages={581--604},
	year={1995},
	publisher={Wiley Online Library}
}

@article{daviesrogers,
	author = {Davies, R. O. and Rogers, C. A.},
	title = {The Problem of Subsets of Finite Positive Measure},
	journal = {Bulletin of the London Mathematical Society},
	volume = {1},
	number = {1},
	pages = {47-54},
	year = {1969},
	month = {03},
	issn = {0024-6093},
	doi = {10.1112/blms/1.1.47},
	url = {https://doi.org/10.1112/blms/1.1.47},
	eprint = {https://academic.oup.com/blms/article-pdf/1/1/47/786704/1-1-47.pdf},
}

@book{bartoszynski1995set,
	title={Set Theory: on the structure of the real line},
	author={Bartoszynski, T. and Judah, H.},
	year={1995},
	publisher={CRC Press}
}

@unpublished{goto-dr-notes,
  author = {Goto, Tatsuya},
  title = {The {Hausdorff} measure due to {Davies} and {Rogers} and its cardinal invariants},
  note = {arXiv preprint arXiv:2601.20414},
  year = {2026}
}

@article{girao2024tournament,
  author = {Gir\~ao, Ant\'onio and Hendrey, Kevin and Illingworth, Freddie
            and Lehner, Florian and Michel, Lukas and Savery, Michael
            and Steiner, Raphael},
  title = {Chromatic number is not tournament-local},
  journal = {Journal of Combinatorial Theory, Series B},
  volume = {168},
  pages = {86--95},
  year = {2024},
  doi = {10.1016/j.jctb.2024.04.005}
}

@book{halbeisen2012,
  author = {Halbeisen, Lorenz J.},
  title = {Combinatorial Set Theory: With a Gentle Introduction to Forcing},
  series = {Springer Monographs in Mathematics},
  publisher = {Springer},
  address = {London},
  year = {2012},
  doi = {10.1007/978-1-4471-2173-2},
  url = {https://people.math.ethz.ch/~halorenz/publications/pdf/cst.pdf}
}
\end{document}